\documentclass[10pt]{article}
\usepackage{amsthm}
\usepackage[colorlinks=true,linkcolor=blue]{hyperref}
\usepackage{enumitem}
\usepackage{mathrsfs}
\usepackage{bbm}
\usepackage[margin=1.6in]{geometry}
\usepackage{marvosym}
\usepackage{graphicx}

\usepackage{tikz}
\usepackage{url}
\usepackage{amsmath}
\usepackage{amsxtra}
\usepackage{amssymb}
\usepackage{lineno}
\usepackage{turnstile}
\usepackage{verbatim}
\usepackage{enumitem}
\usepackage[retainorgcmds]{IEEEtrantools}
\usepackage{accents}
\usepackage[all]{xy}

\newtheoremstyle{slanted}
  {3pt}      
  {3pt}      
  {\slshape} 
  {}         
  {\bfseries}
  {.}        
  { }        
  {}         

\theoremstyle{definition}

\newtheorem{dfn}{Definition}[section]

\newtheorem{tm}[dfn]{Theorem}

\theoremstyle{slanted}

\theoremstyle{definition}

\newtheoremstyle{claimstyle}
  {3pt}      
  {3pt}      
  {\slshape} 
  {}         
  {\bfseries}
  {.}        
  { }        
  {}         

\theoremstyle{claimstyle}
\newtheorem{clm}{Claim}
\newtheorem*{clm*}{Claim}

\newcommand{\cHull}{\mathrm{cHull}}
\newcommand{\sub}{\subseteq}
\newcommand{\om}{\omega}
\newcommand{\pow}{\mathcal{P}}
\newcommand{\OR}{\mathrm{OR}}
\newcommand{\Hull}{\mathrm{Hull}}

\renewcommand{\diamond}{\diamondsuit}
\newcommand{\rg}{\mathrm{rg}}

\newcommand{\ins}{\trianglelefteq}
\newcommand{\pins}{\triangleleft}
\newcommand{\crit}{\mathrm{cr}}
\newcommand{\rest}{\!\upharpoonright\!}
\newcommand{\com}{\circ}

\newcommand{\Ult}{\mathrm{Ult}}
\newcommand{\sats}{\models}
\newcommand{\J}{\mathcal{J}}
\newcommand{\ZFC}{\mathrm{ZFC}}
\newcommand{\es}{\mathbb{E}}
\DeclareMathOperator{\card}{card}
\newcommand{\myurl}{https://sites.google.com/site/schlutzenberg/home-1}
\newcommand{\HC}{\mathrm{HC}}
\begin{document}
\title{Diamonds in mice}
\author{Farmer Schlutzenberg\footnote{afirstname dot alastname at tuwien dot ac dot at, afirstname dot alastname at gmail dot com,
\url{\myurl}}\\
TU Vienna
}
\maketitle
\begin{abstract}
Let $M$ be a $(0,\om_1+1)$-iterable mouse with no largest cardinal.

Let $\gamma\leq\kappa$ 
be uncountable cardinals of $M$, with $\kappa$ regular in $M$. Then $M\sats\diamondsuit_{\kappa\gamma}^+$, and if $\kappa$ is non-ineffable in $M$ then $M\sats\diamondsuit_{\kappa\kappa}^+$.

Suppose that either $\om_1^M=\om_1$ or $M\sats$ ``I am $(0,\om_1+1)$-iterable''. Let $\kappa\geq\om_1^M$ be a cardinal of $M$. Then $M\sats\diamondsuit^*_{\kappa\om_1^M}$.
\end{abstract}

Jensen introduced the combinatorial principle $\diamond_\kappa$, for  regular uncountable cardinals $\kappa$. This asserts that there is a sequence $\left<A_\alpha\right>_{\alpha<\kappa}$
such that $A_\alpha\sub\alpha$ for each $\alpha<\kappa$ and for every $A\sub\kappa$
there is a stationary set of ordinals $\alpha<\kappa$ such that $A\cap\alpha=A_\alpha$.

A classical result is that if $V=L$ and $\kappa>\omega$ is a regular cardinal then $\diamondsuit_\kappa$ holds.
Kunen showed in \cite[Theorem 11, Chapter 2]{jensen_diamond} that if $\kappa$ is a subtle cardinal then $\diamondsuit_\kappa$ holds.

 Recall that for a regular uncountable cardinal $\kappa$ and uncountable cardinal $\gamma\leq\kappa$,
 $\diamondsuit_{\kappa\gamma}^+$
 asserts that there is a function $F:\pow_\gamma(\kappa)\to V$
 such that for all $x\in\pow_\gamma(\kappa)$,
 $F(x)\sub\pow(x)$,
 $F(x)$ has cardinality $\leq\card(x)$, and for all $A\sub\kappa$
 there is a set $B\sub\kappa$,
 unbounded in $\kappa$,
 such that for all $x\in\pow_\gamma(\kappa)$ of limit ordertype, if $\sup(x\cap B)=\sup(x)$
 then $A\cap x\in F(x)$ and $B\cap x\in F(x)$.

 Recall that a regular cardinal $\kappa$
 is \emph{ineffable} if for all sequences $\left<A_\alpha\right>_{\alpha<\kappa}$
 with $A_\alpha\subseteq\alpha$ for each $\alpha$, there is a stationary set $S\sub\kappa$ such that for all $\alpha,\beta\in S$ with $\alpha<\beta$,
 we have $A_\alpha=A_\beta\cap\alpha$.

Jensen showed in \cite[Theorems 10 and 2, Chapter 2]{jensen_diamond}
that assuming $V=L$ and $\kappa>\om$ is a regular cardinal:
\begin{enumerate}[label=--]
  \item If $\kappa$ is non-ineffable then $\diamondsuit_{\kappa\kappa}^+$ holds.
\item If $\gamma$ is any cardinal with $\aleph_1\leq\gamma<\kappa$
then $\diamondsuit^+_{\kappa\gamma}$
 holds.
 \end{enumerate}

 Schindler and Steel introduced the variant notion $\diamondsuit_{\kappa\gamma}^{+,\mathrm{unctbl}}$,
 which is defined just like $\diamondsuit_{\kappa\gamma}^+$,
 except that one restricts the sets $x\in\pow_\gamma(\kappa)$ considered
 to those that are uncountable.
 They showed in \cite{sile}
 that if $M\sats\ZFC$ is a tame mouse,
 $\kappa>\om$ is a regular cardinal in $M$, and $\gamma$ is an $M$-cardinal with
 $\om_1^M<\gamma<\kappa$, then $\diamondsuit_{\kappa\gamma}^{+,\mathrm{unctbl}}$ holds in $M$.

 \begin{tm}\label{tm:diamond^+}
 Let $M$ be a short extender mouse, $\om_1^M\leq\gamma<\kappa<\kappa^{+M} <\kappa^{+M}+\om<\OR^M$
with $\gamma,\kappa$ being $M$-cardinals and $\kappa$ regular in $M$. Then $M\sats\diamondsuit^+_{\kappa\gamma}$.
 \end{tm}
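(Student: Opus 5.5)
We adapt Jensen's proof of $\diamondsuit^+_{\kappa\gamma}$ in $L$ (for $\aleph_1\leq\gamma<\kappa$) to $M$. Condensation for mice and the fine structure of the initial segments $M|\alpha$ replace the $L_\alpha$-hierarchy. Work inside $M$, and let $\lambda=\kappa^{+M}$. The hypothesis $\kappa^{+M}+\om<\OR^M$ guarantees that $M|\lambda$ and a few further levels above it are proper initial segments of $M$, so they are available as elements of $M$.

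\textbf{Definition of $F$.} For $x\in\pow_\gamma(\kappa)$ of limit ordertype, consider the structures $N$ with $N\ins M|\alpha$ for some $\alpha<\lambda$, or more generally sound premice projecting to $\sup(x)$ that are segments of $M$ below $\kappa^{+M}$. Let $F(x)$ be the collection of all $A\cap x$ with $A\in N$, where $N$ ranges over the initial segments $M|\beta$ of $M$ such that $\beta<\theta_x$. Here $\theta_x$ is defined from the transitive collapse: $\theta_x$ is the least $\beta$ such that $\card(M|\beta)>\card(x)$ in $M$, or the least $\beta$ such that $M|\beta$ projects across $\card(x)$. The key constraint is that $F(x)\sub\pow(x)$ has size $\leq\card(x)$. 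In $M$ this should follow from acceptability, which gives $\card(\pow(\mu)\cap M|\mu^{+M})=\mu^{+M}$. A more refined choice is needed: we take $F(x)$ to consist of the sets $\pi^{-1}``$-preimages, that is, $A\cap x$ where $\pi:\bar N\to M|\lambda$ ranges over the inverse collapses of hulls $\Hull^{M|\lambda}(x\cup\{p\})$ whose collapse $\bar N$ is an initial segment of $M$ of size $\card(x)$. Condensation makes $\bar N\pins M$. Since $\card(x)\geq\om_1^M$ and $x$ is uncountable in the relevant sense (because $\gamma\geq\om_1^M$), $\bar N$ lies in $M|\card(x)^{+M}$. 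The possible values $A\cap x$ are then computable from $\bar N$ and the order-isomorphism $x\cong\bar x$, which bounds $\card(F(x))\leq\card(x)$.

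\textbf{Choice of $B$.} Given $A\sub\kappa$ in $M$, we have $A\in M|\lambda$. Let $\langle H_\xi\rangle_{\xi<\kappa}$ be a continuous chain of elementary substructures $H_\xi=\Hull^{M|\lambda}(\xi\cup\{A\})$ with $H_\xi\cap\kappa=\xi$. Let $B$ be the club of such $\xi$, possibly thinned so that $B$ also codes the sequence of collapsed hulls. For $x$ with $\sup(x\cap B)=\sup x=\delta$, the hull $\Hull^{M|\lambda}(x\cup\{A,B\})$ has collapse $\bar N$, and $\pi(\bar A)=A$ with $\bar A=\pi^{-1}``A$. The ordertype of $x$ recovers the relevant data. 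Condensation, in the form available for $(0,\om_1+1)$-iterable premice, places $\bar N\pins M$ of size $\card(x)$. Hence $A\cap x$ and $B\cap x$ lie in $F(x)$.

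\textbf{Main obstacle.} The hard step is the condensation step for hulls of $x$ that are not transitive below $\kappa$, that is, when $x$ is not an initial segment of ordinals. For such hulls the collapse $\pi$ has critical point below $\kappa$. In mice, condensation then yields only that $\bar N$ is either an initial segment of $M$ or an initial segment of an ultrapower $\Ult(M|\xi,E^M_\xi)$, where $\xi=\crit(\pi)$ is measurable in a segment. We would handle the ultrapower alternative by putting into $F(x)$ also the sets computed from such ultrapowers. There are at most $\card(x)$ many of these, since each is determined by an index below $\card(x)^{+M}$. This is where the hypothesis that $\gamma\geq\om_1^M$ and that $M$ is a short extender mouse enters, since it guarantees that condensation has this two-case form. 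A second technical point is ensuring that $x\cap B$ cofinal in $x$ forces the hull to collapse correctly. We handle this by coding, into $B$, ordinals $\xi$ with $H_\xi\cap\lambda$ having full structure, so that cofinally many $\xi\in x$ make $\pi$ agree with the identity on a cofinal part of $x$.
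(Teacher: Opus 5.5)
Your proposal has a genuine gap: neither your definition of $F$ nor your verification step holds up.

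\textbf{The definition of $F$.} Your final version takes $A\cap x$ over all hulls $\Hull^{M|\lambda}(x\cup\{p\})$ whose collapse is a segment of $M$ of size $\card(x)$. This does not give $\card(F(x))\leq\card(x)$. Every $A\in M|\lambda$ lies in $\Hull^{M|\lambda}(x\cup\{A\})$, and in $L$ every such collapse is some $L_\alpha$ of size $\card(x)$. So in $L$ your family is all of $\pow(x)$. Also, $\pow_\gamma(\kappa)$ contains countable $x$, so your appeal to $x$ being uncountable is misplaced.

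\textbf{The verification.} You claim condensation puts the collapse $\bar N$ of $\Hull^{M|\lambda}(x\cup\{A,B\})$ into $M$, but this is unjustified. Mouse condensation needs the relevant segments of $\bar N$ to project to $\crit(\pi)$. That fails when $\crit(\pi)$ lies below the largest cardinal $\pi^{-1}(\kappa)$ of $\bar N$, which is exactly your non-transitive case. Your ``main obstacle'' paragraph simply assumes the two-case conclusion anyway. The hypothesis $\gamma\geq\om_1^M$ plays no role in the form of condensation either. Even granting $\bar N\pins M$, nothing puts $A\cap x$ into a family of size $\leq\card(x)$ fixed independently of $A$, which is the whole content of $\diamondsuit^+$.

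\textbf{The choice of $B$.} Your $B$, the club of $\xi$ with $H_\xi\cap\kappa=\xi$, carries too little information. A small hull inside $M|\kappa$ containing such $\xi$ cannot reconstruct the collapsed hulls, whose heights are determined up in $M|\kappa^{+M}$.

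\textbf{The missing mechanism.} The paper follows Jensen's argument, with one extra step for mice.
\begin{enumerate}[label=(\arabic*)]
\item Set $F(X)=\{Y\cap X\mid Y\in M_X\}$ for the single hull $M_X=\Hull^{M|\kappa}(X\cup\{X,\gamma\})$. Since $X\in M_X$, this hull sees that $\sup X$ has cofinality $<\gamma$.
\item Let $A$ be the $M$-least counterexample, so $A$ is definable from $\gamma$. Build a continuous chain $N_\nu\preccurlyeq M|\kappa^{+M}$ with $N_\nu\cap\kappa=\kappa_\nu$ transitive. Here condensation does apply, since the critical point is the largest cardinal of the collapse $\bar N_\nu$. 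It gives $\bar N_\nu=M||\beta_\nu$ or $\bar N_\nu=\Ult(M,F)||\beta_\nu$.
\item Let $B=\{\beta_\nu\}_{\nu<\kappa}$ consist of the heights $\beta_\nu$, not the $\kappa_\nu$.
\item Suppose $\sup(X\cap B)=\sup X=\kappa_\eta$. Then $M_X$ contains cofinally many $\beta_\nu$, hence the $\bar N_\nu$ and their connecting maps. In the collapse $\widetilde M_X$, their direct limit $\bar N^*_\infty$ embeds into $\bar N_\eta$, so it thinks $\widetilde\kappa_\eta$ is regular, while $\widetilde M_X$ thinks $\widetilde\kappa_\eta$ is singular.
\item In $L$ this immediately makes $\bar N^*_\infty$ an initial segment of $\widetilde M_X$ below $\widetilde\kappa_\eta^{+\widetilde M_X}$. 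For mice, a first disagreement would yield a sound bicephalus $(\widetilde\kappa_\eta,J,K)$ with $J\neq K$. It is iterable because the embeddings of $J$ into $\bar N_\eta$ and of $K$ into $M_X$ agree below $\widetilde\kappa_\eta$. The bicephalus theory then forces $J=K$, a contradiction.
\item Hence $\beta_\eta\in M_X$, so $\bar N_\eta\in M_X$. Since $A\cap\kappa_\eta$ and $B\cap\kappa_\eta$ are definable from $\bar N_\eta$ and $\gamma$, this gives $A\cap X,\ B\cap X\in F(X)$.
\end{enumerate}
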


Note that in comparison with the result of Schindler and Steel in \cite{sile},
we have dropped the tameness assumption on $M$, allow $\gamma=\om_1^M$,
and deal with $\diamondsuit_{\kappa\gamma}^+$, instead of just $\diamondsuit^{+,\mathrm{unctbl}}_{\kappa\gamma}$.

The proof of Theorem \ref{tm:diamond^+} will be very close
to Jensen's proof for $L$, but will need one extra idea at the end of the argument. Likewise for the following version:

\begin{tm}\label{tm:diamond^+_kappa,kappa}
Let $M$ be a short extender mouse, $\om<\kappa<\kappa^{+M}<\kappa^{+M}+\om<\OR^M$,
with $\kappa$ a regular cardinal in $M$,
but $\kappa$ non-ineffable in $M$.
Then $M\sats\diamondsuit_{\kappa\kappa}^+$.
\end{tm}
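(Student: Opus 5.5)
We adapt Jensen's proof of $\diamondsuit^+_{\kappa\kappa}$ in $L$ for non-ineffable $\kappa$. We use the fine structure of $M$ and condensation for initial segments of $M$ in place of that of $L$.

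\textbf{Setup.} Work in $M$. Fix a counterexample to ineffability, $\vec A=\left<A_\alpha\right>_{\alpha<\kappa}$ in $M$, with $A_\alpha\sub\alpha$ and such that no stationary $S$ is coherent for $\vec A$. Since $\kappa^{+M}+\om<\OR^M$, we may pick $N=M|\beta$ with $\kappa^{+M}<\beta<\kappa^{+M}+\om$ such that $\vec A\in N$ and $N$ projects to $\kappa$. For $x\in\pow_\kappa(\kappa)$ of limit ordertype, let $N_x$ be the transitive collapse of $\Hull_1^{N}(x\cup\{\vec A,p\})$, for a suitable parameter $p$ (a standard parameter, arranged so the hull is sound). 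By condensation for mice, $N_x$ is an initial segment of $M$, or of an ultrapower of such by an extender on the sequence of $M$. In either case $N_x$ is determined in $M$ by at most $\card(x)$ many possibilities. We then define $F(x)$ to be the set of all $\pi^{-1}$-images, for the relevant collapse maps, of subsets of $x$ definable over the candidate structures associated to $x$ (initial segments of $M$ of height less than the next projectum drop below $\card(x)^{+}$). Then $F(x)\sub\pow(x)$ and $\card(F(x))\le\card(x)$, because $M$ satisfies GCH-style acceptability below each cardinal.

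\textbf{Choosing $B$.} Given $A\sub\kappa$, let $N'$ be an initial segment $M|\beta'$ projecting to $\kappa$ with $A,\vec A\in N'$. Let $B$ be the club-type set of $\alpha<\kappa$ such that $\Hull^{N'}(\alpha\cup\{A,\vec A\})\cap\kappa=\alpha$, thinned out so that $B$ also codes enough of $A$. We then show that $A\cap x$ and $B\cap x$ are in $F(x)$ whenever $\sup(x\cap B)=\sup x$. Collapsing $\Hull^{N'}(x\cup\{A,\vec A\})$ gives a structure $\bar N$ with $\bar N=M|\bar\beta$ by condensation. Since $x\cap B$ is cofinal in $\sup x$, the hull's intersection with $\kappa$ is governed by $x$. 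Hence $A\cap x$ and $B\cap x$ are images of sets definable over $\bar N$, so they lie in $F(x)$.

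\textbf{The real difficulty.} Jensen's argument has two cases. Either $\sup x\in B$ and the hull is closed; or, in the non-closed case, one must identify $\bar N$ from $x$ alone. For $\kappa=\kappa$ this is exactly where non-ineffability enters. If for stationarily many $\alpha$ the relevant collapsed structures were not correctly guessed, then the $A_\alpha$'s reflected inside those structures would cohere on a stationary set. That would contradict the choice of $\vec A$. Hence we build $B$ from points where $\vec A$ fails to cohere, which lets the collapse height be recovered.

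\textbf{Main obstacle.} In a mouse, condensation does not always yield $\bar N\ins M$. It may instead give $\bar N\ins\Ult(M|\xi,E)$, or $\bar N$ may be active with an extender not on $M$'s sequence (the "extra idea" the paper alludes to). So I would handle separately the case where $\crit$ of the relevant extender is $\sup x$, or where $\bar N$ is a proper protomouse. In this case I would enlarge $F(x)$ to include sets definable over $\Ult(M|\mu,E_{\mu})$ for the unique $M$-extender indexed at the relevant height. This adds only $\le\card(x)$ many sets. I expect verifying that this enlargement still captures $A\cap x$, while keeping the cardinality bound, to be the hardest step.
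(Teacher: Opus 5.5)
There is a genuine gap, and it is at the one step where the theorem has content. $F(x)$ must be fixed before $A$. So you have to show that, for every $x$ with $x\cap B$ cofinal in $\sup x$, some structure computing $A\cap\sup x$ and $B\cap\sup x$ already belongs to an object determined by $x$ alone. In your ``Choosing $B$'' step you instead collapse $\Hull^{N'}(x\cup\{A,\vec A\})$, which depends on $A$, and then simply assert that ``the hull's intersection with $\kappa$ is governed by $x$''. Nothing links that collapse to the unspecified candidate structures defining your $F(x)$, and your third paragraph only restates that $\bar N$ must be identified from $x$.

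Your $B$ is also the wrong kind of set. Knowing cofinally many closure points does not let a hull of $M|\kappa$ recover the collapsed structures, since these are defined over $M|\kappa^{+M}$. The paper, following Jensen, proceeds as follows:
\begin{itemize}
\item Take a continuous chain $N_\nu\preccurlyeq M|\kappa^{+M}$ with $N_\nu\cap\kappa=\kappa_\nu$ transitive.
\item Let $B=\{\beta_\nu\}_{\nu<\kappa}$ consist of the \emph{heights} $\beta_\nu=\OR^{\bar N_\nu}$ of the collapses.
\item Set $F(x)=\{Y\cap x\mid Y\in M_X\}$, where $M_X=\Hull^{M|\kappa}(x\cup\{x,S_{\sup x}\})$ and $\vec S$ (your $\vec A$) is the least counterexample to ineffability.
\end{itemize}
Then $\sup x=\kappa_\eta$ for a limit $\eta$. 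Cofinally many $\beta_\nu\in x$ let $M_X$ define cofinally many $\bar N_\nu$ together with the maps $\sigma_{\xi\zeta}$. The direct limit $\bar N^*_\infty$ of their preimages in the collapse $\widetilde M_X$ maps into $\bar N_\eta$ via a map $\tau^*$ that agrees with the uncollapse $\tau$ below $\widetilde\kappa_\eta$. The proof ends by showing that $\bar N^*_\infty$ is a segment of $\widetilde M_X$ (or of $\Ult(\widetilde M_X,\widetilde F)$ in the active case) of height $<\widetilde\kappa_\eta^{+\widetilde M_X}$. This gives $\beta_\eta\in M_X$, hence $\bar N_\eta\in M_X$, hence $A\cap\kappa_\eta$ and $B\cap\kappa_\eta$ lie in $M_X$. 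None of this appears in your sketch.

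You also misplace both points where the real work lies.

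First, non-ineffability does not enter by ``building $B$ from points where $\vec A$ fails to cohere''. The height bound above comes from $\widetilde\kappa_\eta$ being regular in $\bar N^*_\infty$ but singular in $\widetilde M_X$, and this fails when $\kappa_\eta$ is regular in $M$. The paper repairs it with the parameter $S_{\sup x}$ and the fact that $S_{\kappa_\eta}\notin\bar N_\eta$. Otherwise $T=\{\alpha<\kappa_\eta\mid S_\alpha=S_{\kappa_\eta}\cap\alpha\}$ would be stationary in $\bar N_\eta$: a club $C\in\bar N_\eta$ disjoint from $T$ gives a club $\pi_\eta(C)\ni\kappa_\eta=\crit(\pi_\eta)$ on which $S_\alpha\neq\pi_\eta(S_{\kappa_\eta})\cap\alpha$, which is false at $\alpha=\kappa_\eta$. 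Then $\pi_\eta(T)$ would be a stationary coherent set for $\vec S$, contradicting its choice. Consequently, if $\bar N^*_\infty$ reached $\widetilde\kappa_\eta^{+\widetilde M_X}$, it would contain $\widetilde S_{\kappa_\eta}$, and $\tau^*$ would put $S_{\kappa_\eta}$ into $\bar N_\eta$.

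Second, the step beyond $L$ is not enlarging $F(x)$ by ultrapowers. The active case of condensation is routine, because the active extender of $M|\kappa_\eta$ already lies in $M_X$. What is needed is that $\bar N^*_\infty$ and $\widetilde M_X$ cannot fork below both heights. A fork would yield distinct sound $J\pins\bar N^*_\infty$ and $K\pins\widetilde M_X$ projecting to $\widetilde\kappa_\eta$ with $\widetilde\kappa_\eta^{+J}=\widetilde\kappa_\eta^{+K}$. This is a sound bicephalus, iterable via $\tau^*\rest J$ and $\tau\rest K$, so $J=K$.

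Two smaller problems:
\begin{itemize}
\item No $M|\beta$ with $\beta>\kappa^{+M}$ projects to $\kappa$, since $\kappa^{+M}$ is an $M$-cardinal.
\item For non-transitive $x$, nothing ensures that the critical point of the collapse of $\Hull_1^N(x\cup\{\vec A,p\})$ is at least that collapse's projectum, so your appeal to condensation there is unjustified.
\end{itemize}
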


Let $\om_1\leq\gamma\leq\kappa$ be cardinals (they need not be regular). Recall that $\diamondsuit^*_{\kappa\gamma}$ asserts that there is a function $F:\pow_\gamma(\kappa)\to V$ such that $F(x)$ has cardinality $\leq\card(x)$ for all $x\in\pow_\gamma(\kappa)$, and for every set $A\sub\kappa$ there is a club $C\sub\pow_\gamma(\kappa)$ such that for all $x\in C$, we have $A\cap x\in F(x)$.

Schindler and Steel showed in \cite{sile}
that if $M$ is a tame mouse modelling ZFC, then $\diamondsuit^*_{\kappa\gamma}$ holds in $M$
for all $M$-cardinals $\kappa,\gamma$ with $\omega_1^M<\gamma<\kappa$. They mentioned that
they did not know whether the same held with $\omega_1^M=\gamma$.

\begin{tm}\label{tm:diamond^*}
Let $M$ be a $(0,\om_1+1)$-iterable premouse, and $\om_1^M\leq\kappa<\kappa^{+M}<\OR^M$, where $\kappa$ is an $M$-cardinal.
Suppose that either:
\begin{enumerate}[label=(\roman*)]
\item $\om_1^M=\om_1$, or
\item\label{item:self-it} $M\sats$ ``For every $N\pins M|\kappa^{+M}$ and every $\bar{N}$ of cardinality $\leq\aleph_1$ which can be elementarily embedded into $N$, $\bar{N}$ is $(0,\om_1+1)$-iterable''.
\end{enumerate}
Then $M\sats\diamondsuit^*_{\kappa\om_1^M}$.
\end{tm}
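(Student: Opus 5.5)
We will define $F$ in $M$ by a condensation-style recipe and then show it works using comparison/condensation for countable or $\aleph_1$-sized hulls. Work in $M$ and write $\om_1=\om_1^M$. For $x\in\pow_{\om_1}(\kappa)$ (so $x$ is countable in $M$), let $F(x)$ be the set of all $A'\sub x$ such that $A'=\pi``\bar{A}$ for some $\bar{A}$ with the following property: there are a sound premouse $\bar{N}$ projecting to $\om$ (or more generally to an ordinal $\le\mathrm{otp}(x)$), a set $\bar{A}\in\bar{N}$, and a map $\pi$ from the transitive collapse of $x$ onto $x$ (namely the inverse of the collapse of $x$) such that $\bar{N}$ is $(0,\om_1+1)$-iterable. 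To make $F(x)$ countable, we will instead take $\bar{N}$ ranging over the countably many $\bar{N}\pins M|\om_1$ which are initial segments of the $M$-hierarchy with $\rho_\om^{\bar{N}}=\om$ and $\bar{N}\sats$ ``$\bar\kappa=\mathrm{otp}(x)$ exists and $\bar{N}$ is defined over the relevant level''. We will then put $\bar{A}\cap\mathrm{otp}(x)$ for $\bar{A}\in\bar N$ into $F(x)$ via the collapse of $x$. Thus $F(x)=\{\pi_x``(\bar{A}): \bar{A}\in M|\om_1,\ \bar{A}\sub\mathrm{otp}(x)\}$, where $\pi_x$ is the uncollapse. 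This is countable in $M$, since $\mathrm{otp}(x)<\om_1$ and $M|\om_1$ has countable power sets of countable ordinals ($\mathrm{GCH}$-type fine structure in mice).

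Given $A\sub\kappa$ in $M$, fix $N\pins M|\kappa^{+M}$ with $A\in N$ and $\rho_\om^N=\kappa$ (possible since $\kappa^{+M}<\OR^M$ and $M|\kappa^{+M}$ is the union of such $N$). Let $C$ be the club (in $M$) of countable $x\sub\kappa$ of the form $x=X\cap\kappa$ where $X\prec N$ (appropriately, e.g.\ $\Sigma_{k+1}$-elementary hulls containing $A$ and the relevant parameters) is countable. For such $X$, let $\bar{N}$ be the transitive collapse and $\pi:\bar N\to N$ the uncollapse. Then $\bar{A}=\pi^{-1}(A)=$ collapse of $A\cap x$, $\bar\kappa=\crit(\pi)$ or $\mathrm{otp}(x)$, and $\bar N$ is sound with $\rho_\om^{\bar N}=\bar\kappa$. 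It therefore suffices to show $\bar{N}\pins M|\om_1$, or at least that $\bar{A}\in M|\om_1$; this yields $A\cap x=\pi``\bar A\in F(x)$.

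This last point is the main obstacle, and it is exactly a condensation statement. Under (ii), $M$ itself knows $\bar{N}$ is $(0,\om_1+1)$-iterable, so inside $M$ we compare $\bar{N}$ with $M|\om_1$ (both countable/sound, projecting to $\bar\kappa<\om_1$). Standard comparison of sound mice projecting to $\bar\kappa$ agreeing below $\bar\kappa$ gives $\bar N\ins M|\om_1$ or vice versa, and the latter is impossible by cardinality, so $\bar A\in M|\om_1$. Here $\bar\kappa$ is countable, so $M|\bar\kappa=\bar N|\bar\kappa$ by condensation for $\pi$ with $\crit(\pi)=\bar\kappa$; the usual care is needed about whether $\bar N|\bar\kappa$ is active or $\bar\kappa$ indexes an extender. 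Under (i), we argue in $V$. Since $\om_1^M=\om_1$, $\bar N$ embeds into $N$, so it is $(0,\om_1+1)$-iterable by pulling back the strategy of $M$. Comparison in $V$ then gives $\bar N\pins M|\om_1$; it is absolute that $\bar A\in M|\om_1$.

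The delicate step is arranging that $C$ is club in $M$'s sense and that the comparison argument, when it is run in $V$, does not need $C$ to be in $V$. This is fine, because we only need, for each $x\in C^M$, the fact $\bar A\in M|\om_1^M$. That fact is verified externally, and the comparison uses countable mice, for which $\om_1+1$-iterability suffices. We will also handle the fine-structural details (using $\Sigma_{k+1}$-hulls with $k$ such that $\rho_{k+1}^N=\kappa$, and the solidity/universality of standard parameters to get soundness of $\bar N$) as in the proof of Theorem \ref{tm:diamond^+}.
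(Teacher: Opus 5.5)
Your proposal has a genuine gap at the one place where $\diamondsuit^*$ has content: the bound $\card(F(x))\leq\card(x)$.

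\textbf{The cardinality bound fails.} Your final definition $F(x)=\{\pi_x``\bar A : \bar A\in M|\om_1,\ \bar A\sub\mathrm{otp}(x)\}$ is just $\pow(x)\cap M$, because every subset of a countable ordinal lying in $M$ already belongs to $M|\om_1^M$. By Cantor's theorem, applied in $M$ to the subsets of $\om\leq\mathrm{otp}(x)$, this set is uncountable in $M$ for every infinite $x$. Your justification, that ``$M|\om_1$ has countable power sets of countable ordinals'', is false. Acceptability gives $\pow(\alpha)\cap M\sub M|\om_1^M$ for $\alpha<\om_1^M$, not that this set is countable. Likewise there are $\aleph_1^M$ many levels of $M|\om_1^M$ projecting to $\om$, not countably many.

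\textbf{The step you call the main obstacle is empty.} Getting $\bar A\in M|\om_1$ is automatic, for the same reason, so it carries no information. The comparison you sketch for it is also unjustified. For $\kappa>\om_1^M$ we have $\crit(\pi)=x\cap\om_1^M<\bar\kappa=\rho_\om^{\bar N}$, since $\bar N$ believes $\crit(\pi)$ is its $\om_1$. So condensation does not give $\bar N|\bar\kappa=M|\bar\kappa$. Also $M|\om_1^M$ does not project to $\bar\kappa$, so the ``standard comparison of sound mice projecting to $\bar\kappa$'' does not apply.

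\textbf{What is missing.} You need a single countable structure, determined by $x$ alone, that contains the collapse of $A$ for club many $x$, simultaneously for all $A\sub\kappa$. Your collapsed hulls $\bar N$ depend on $A$. You would have to show that, for fixed $x$, all of them together carry only countably many subsets of $\bar\kappa$.

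This is what the paper does. Let $\bar M_X=\cHull^{M|\kappa}(X)$ with uncollapse $\pi_X$. Call a sound $J\in\HC$ $X$-good if:
\begin{itemize}
\item $\bar M_X\pins J$ and $\OR^{\bar M_X}$ is a $J$-cardinal;
\item $\rho_\om^J=\OR^{\bar M_X}$;
\item $J$ embeds into some $J'\pins M|\kappa^{+M}$ with $\rho_\om^{J'}=\kappa$, via a map extending $\pi_X$.
\end{itemize}
One then proves two facts.
\begin{itemize}
\item[(a)] Any two $X$-good premice are $\ins$-comparable. This uses a sound bicephalus whose iterability comes from the two embeddings agreeing with $\pi_X$.
\item[(b)] The stack $N_X$ of all $X$-good premice has countable height. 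Otherwise $N_X$ has height $\om_1$ and largest cardinal $\OR^{\bar M_X}$. Comparing it with $M|\om_1^M$, in a universe where $\om_1^M=\om_1$ and both are iterable, is then impossible. This is exactly where hypothesis (i) or (ii) is used.
\end{itemize}
Then $F(X)=\{X\cap\pi_X``y : y\in N_X\}$ is countable. Your hull-collapse argument supplies the last step: for club many $X$, the collapse of a countable hull of some $J'\ni A$ is $X$-good, hence an initial segment of $N_X$. Steps (a) and (b) carry the real content and the use of the iterability hypotheses, and they are absent from your proposal.
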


Note that in this result,  $\kappa$ is not assumed to be regular in $M$. In the case that $\kappa$ is regular in $M$, the fact that $M\sats\diamondsuit^*_{\kappa\om_1^M}$
follows easily from the fact that $M\sats\diamondsuit^+_{\kappa\om_1^M}$,
which holds by Theorem \ref{tm:diamond^+}. So it is only in the
case that $\kappa$ is singular in $M$ that we get something new here.

Note that in  the Schindler-Steel result on $\diamondsuit^*_{\kappa\gamma}$, $M$ is assumed tame, and $\gamma>\om_1^M$. 
However, our self-iterability assumption  is a significant one,
and goes beyond what can be proven in general even for tame mice.
(It is shown in \cite{sile}
that all tame mice modelling ``$\om_1$ exists'' model ``there is $\alpha<\om_1$ such that $M|\om_1$ is above-$\alpha$, $(\om,\om_1)$-iterable''.
But in \cite{odle},
there is an example of a proper class tame mouse modelling ``there is no $\alpha<\om_1$ such that $M|\om_1$ is above-$\alpha$, $(\om,\om_1+1)$-iterable''.)
However, this degree of self-iterability does hold for many ``$\varphi$-minimal'' mice. Clearly it implies that $M\sats$ ``$M|\om_1^M$ is $(0,\om_1+1)$-iterable''.
\section{$\diamondsuit^+_{\kappa\gamma}$ and $\diamondsuit^+_{\kappa\kappa}$}
The proofs of Theorems \ref{tm:diamond^+} and \ref{tm:diamond^+_kappa,kappa}
will follow very closely Jensen's original proofs of these facts in
\cite{jensen_diamond}, under the added assumption that $V=L$. (This argument was also used in the Schindler-Steel proof of $\diamondsuit^{+,\mathrm{unctbl}}_{\kappa\gamma}$,
though significant extra argument was
also used there.)
The main point of divergence arises in
one short paragraph at the very end of the proof, and there we need some further argument. That extra argument needs the details of Jensen's construction, so we will give that in detail, for self-containment. There are also some minor extra details pertaining to condensation, but these are routine. Let us also note that in \cite{sile}, it was important that $M$ was sufficiently self-iterable,
and for this, the tameness of $M$
was key, but our proof will avoid any appeal to self-iterability.

\begin{proof}[Proof of Theorem \ref{tm:diamond^+}] We follow the proof of \cite[Theorem 2]{jensen_diamond}, making some very minor adaptations, until the last paragraph of the proof, which will require an extra  (but short) argument.

Working in $M$, we define $F:\pow_{\gamma}(\kappa)\to V$ as follows. Let $X\in\pow_{\gamma}(\kappa)$.
If $X$ does not have limit ordertype then $F(X)=\emptyset$.
Suppose $X$ has limit ordertype.
Let $M_X=\Hull^{M|\kappa}(X\cup\{X,\gamma\})$.
Then we define $F(X)=\{Y\cap X\bigm|Y\in M_X\}$.

We claim this works.
Suppose not. Let $A$ be the $M$-least $A\sub\kappa$ which is a counterexample.
We will define a set $B=\{\beta_\eta\}_{\eta<\kappa}$, cofinal in $\kappa$,
and show that $B$ works for $A$; that is,
for all $X\sub\kappa$ such that $\card(X)<\gamma$, $X$ has limit ordertype and $\sup(X)=\sup(X\cap B)$, we have $A\cap X\in F(X)$ and $B\cap X\in F(X)$. This will be a contradiction.

We first define $\left<N_\nu\right>_{\nu<\kappa}$,
as follows. Let  $N_0$ be the least $N\preccurlyeq M|\kappa^{+M}$ such that $N\cap\kappa$ is transitive and $\gamma\in N$.
Given $N_\nu$ where $\nu<\kappa$,
let $N_{\nu+1}$ be the least $N\preccurlyeq M|\kappa^{+M}$ such that $N\cap\kappa$ is transitive
and $N_\nu\cup\{N_\nu\}\sub N$.
Given $N_\nu$ for all $\nu<\eta$,
where $\eta$ is a limit, let $N_\eta=\bigcup_{\nu<\eta}N_\alpha$.
Now let $\kappa_\nu=N_\nu\cap\kappa$,
$\bar{N}_\nu$ be the transitive collapse of $N_\nu$,
$\pi_\nu:\bar{N}_\nu\to N_\nu$ the uncollapse map, and $\beta_\nu=\OR^{\bar{N}_\nu}$.
Note that $\crit(\pi_\nu)=\kappa_\nu$,
$\pi_\nu(\kappa_\nu)=\kappa$,
and $\kappa_\nu$ is the largest cardinal of $\bar{N}_\nu$,
so by condensation, either:
\begin{enumerate}[label=--]
    \item $M|\kappa_\nu$ is passive and $\bar{N}_\nu=M||\beta_\nu$, or
    \item $M|\kappa_\nu$ is active with extender $F$ (which is necessarily $M$-total) and $\bar{N}_\nu=\Ult(M,F)||\beta_\nu$.
\end{enumerate}

Set $B=\{\beta_\nu\}_{\nu<\kappa}$. We want to see  $B$ works.
So let $X\in\pow_{\gamma}(\kappa)$
have limit ordertype and be such that $\sup(X\cap B)=\sup(X)$.
Note then that $\sup(X)=\kappa_\eta$
for some limit ordinal $\eta$.

Now $F(X)\not\sub \bar{N}_\eta$,
since $\kappa_\eta$ is regular in $\bar{N}_\eta$,
but not regular in $M_X$,
since $X\in M_X$,
and $X$ has ordertype $<\gamma$,
and $\gamma<\kappa_0<\kappa_\eta$
since $\gamma\in N_0$.

Let $A_\eta=\pi_\eta^{-1}(A)$. (Recall $A$ was the $M$-least counterexample selected above, which is definable over $M|\kappa^{+M}$ from the parameter $\gamma$, and so $A\in N_0$.)
Then clearly $A_\eta=A\cap\kappa_\eta$.
Let $B_\eta=B\cap\kappa_\eta$. (We do not claim that $B_\eta\in \bar{N}_\eta$.)
We need to see that $A_\eta,B_\eta\in M_X$. For this it is enough to see that  $\bar{N}_\eta\in M_X$, since
 $A_\eta,B_\eta$ are definable over $\J(\bar{N}_\eta)$ from the parameter $\gamma$.
 And for this, it is enough to see that $\beta_\eta \in M_X$, since either $M|\kappa_\eta$ is passive and $\bar{N}_\eta=M||\beta_\eta\in M_X$,
 or $M|\kappa_\eta$ is active with extender $F$, and hence $F\in M_X$
 (since $\kappa_\eta=\sup(X)$ and $X\in M_X$),
 and $\bar{N}_\eta=\Ult(M,F)||\beta_\eta\in M_X$.

Quite similarly, for each $\nu<\eta$,
$\left<\bar{N}_\xi\right>_{\xi\leq\nu}$ is definable from the parameter $(\beta_\nu,\gamma)$; here if $M|\kappa_\nu$ is active, then  note that 
$\kappa_\nu$ is easily recovered from $\beta_\nu$, since there is no extender in $\es^M$ with index in $(\kappa_\nu,\beta_\nu]$, since $M|\kappa_\nu$ codes a surjection of $\kappa_\nu$ onto $\beta_\nu$). Likewise, 
the natural system of embeddings
$\left<\sigma_{\xi\zeta}\right>_{\xi\leq\zeta\leq\nu}$
with $\sigma_{\xi\zeta}:\bar{N}_\xi\to\bar{N}_\zeta$
(that is, $\sigma_{\xi\zeta}=\pi_\xi\com\pi_\zeta^{-1}$)
is definable from the same parameter.

Let $\widetilde{M}_X$ be the transitive collapse of $M_X$,
and $\tau:\widetilde{M}_X\to M_X$ the uncollapse map.
Since $B\cap X$ is cofinal in $\kappa_\eta$,
we have $\beta_\nu\in\rg(\tau)$ for cofinally many $\nu<\eta$,
and $\gamma\in\rg(\tau)$,
so we have preimages $\widetilde{\bar{N}}_\xi$ under $\tau$ of $\bar{N}_\xi$,
for cofinally many $\xi<\eta$,
and preimages $\widetilde{\sigma}_{\xi\zeta}$ under $\tau$ of $\sigma_{\xi\zeta}$,
for the corresponding $\xi\leq\zeta<\eta$.
Let $\bar{N}^*_\infty$ be the direct limit
of these $\widetilde{\bar{N}}_\xi$'s under these maps $\widetilde{\sigma}_{\xi\zeta}$. Let $\sigma_{\xi\infty}:\widetilde{\bar{N}}_\xi\to\bar{N}^*_\infty$ be the direct limit map. Let $\tau^*:\bar{N}^*_\infty\to \bar{N}_\eta$ be the natural map; that is, $\tau^*\com\widetilde{\sigma}_{\xi\infty}=\sigma_{\xi\eta}\com\tau\rest\widetilde{\bar{N}}_\xi$,
for the appropriate ordinals $\xi<\eta$.
Let $\tau(\widetilde{\kappa}_\eta)=\kappa_\eta=\sup(X)$.
Note that $\bar{N}^*_\infty|\widetilde{\kappa}_\eta=\widetilde{M}_X|\widetilde{\kappa}_\eta$ and $\tau^*\rest\widetilde{\kappa}_\eta=\tau\rest\widetilde{\kappa}_\eta$.
Since $\tau^*$ is elementary,
$\bar{N}^*_\infty$ is a premouse
extending $\widetilde{M}_X||\widetilde{\kappa}_\eta$,
with largest cardinal $\widetilde{\kappa}_\eta$, and $\bar{N}^*_\infty\sats$ ``$\widetilde{\kappa}_\eta$ is regular''.
But $\widetilde{M}_X\sats$ ``$\widetilde{\kappa}_\eta$ is singular''.

Now we claim that $\OR^{\bar{N}^*_\infty}<\widetilde{\kappa}_\eta^{+\widetilde{M}_X}$ and either:
\begin{enumerate}[label=--]
    \item $M|\kappa_\eta$ is passive (so $\widetilde{M}_X|\widetilde{\kappa}_\eta$ is passive) and $\bar{N}^*_\infty=\widetilde{M}_X||\OR^{\bar{N}^*_\infty}$, or
    \item $M|\kappa_\eta$ is active with extender $F$, $\widetilde{M}_X|\kappa_\eta$ is active with extender $\widetilde{F}$, and $\bar{N}^*_\infty=\Ult(\widetilde{M}_X,\widetilde{F})||\OR^{\bar{N}^*_\infty}$.
\end{enumerate}
Letting $\widetilde{\beta}=\OR^{\bar{N}^*_\infty}$, it easily follows
that $\tau(\beta)=\beta_\eta\in M_X$,
as desired.

Well, since $\widetilde{M}_X\sats$ ``$\widetilde{\kappa}_\eta$ is singular'',
we can't have $\widetilde{M}_X|\widetilde{\kappa}_\eta^{+\widetilde{M}_X}\sub \bar{N}^*_\infty$. Up until this stage of the argument we have just replicated Jensen's argument assuming
 $V=L$ (adjusted for the ``active'' case of condensation) and at this point, if $V=L$, the claim immediately follows now. But in general, here we need a little extra argument.
 If the claim fails,
there are $\chi,J,K$ such that $\chi<\min(\OR^{\bar{N}^*_\infty},\widetilde{\kappa}_\eta^{+\widetilde{M}_X})$,
 $\bar{N}^*_\infty||\chi=\widetilde{M}_X||\chi$,
 $J\pins\bar{N}^*_\infty$,
 $\rho_\om^J=\widetilde{\kappa}_\eta<\widetilde{\kappa}_\eta^{+J}=\chi$,
 $K\pins\widetilde{M}_X$,
 $\rho_\om^K=\widetilde{\kappa}_\eta<\widetilde{\kappa}_\eta^{+K}=\chi=\widetilde{\kappa}_\eta^{+J}$,
 and $J\neq K$. Note $J,K$ are sound.
 So $\mathscr{B}=(\widetilde{\kappa}_\eta,J,K)$
 is a sound bicephalus (see \cite{premouse_inheriting}).
 And $\mathscr{B}$ is iterable,
 because we have $\tau^*\rest J:J\to\tau^*(J)$
 and $\tau\rest K:K\to\tau(K)$,
 and $\tau,\tau^*$ agree below $\widetilde{\kappa}_\eta$,
 and by \cite{premouse_inheriting}. But then by \cite{premouse_inheriting},
 $J=K$, a contradiction.
\end{proof}
\begin{proof}[Proof of Theorem \ref{tm:diamond^+_kappa,kappa}]  We follow the proof of \cite[Theorem 10]{jensen_diamond}, adjusted as in the proof of Theorem \ref{tm:diamond^+}. (But actually, the adjustments are just as there, so we won't discuss them explicitly. So the details that follow are really just an exposition of Jensen's argument from \cite{jensen_diamond}, included here for convenience.)
      
      So work in $M$. Fix the $M$-least counterexample
     $\vec{S}=\left<S_\alpha\right>_{\alpha<\kappa}$ the ineffability of $\kappa$. Note that $\vec{S}$ is definable over $M|\kappa^{+M}$. We define $F:P_\kappa(\kappa)\to V$ just as before, except that we define $M_X=\Hull^{M|\kappa}(X\cup\{X,S_{\sup(X)}\})$ to define $F(X)$, when $X$ has limit ordertype).

     We claim $F$ works. Suppose not and let $A\sub\kappa$ be the least counterexample. We need $B\sub\kappa$ which works for $A$. So $A$ is definable over $M|\kappa^{+M}$ without parameters.

     Define $N_\alpha,\kappa_\alpha,\beta_\alpha$, etc, for for $\alpha<\kappa$, as before (but with $\gamma$ replaced by $\emptyset$). Define $B$ as before.
     Fix $X\in\pow_\kappa(\kappa)$ of limit ordertype,
     with $X\cap B$ cofinal in $\sup(X)$.
     Again, $\sup(X)=\kappa_\eta$ for some limit $\eta$.
     It is enough to see $\beta_\eta\in M_X$.
     This is like before, except that it might be that $\kappa_\eta$ is regular (hence regular in $M_X$), which needs to be handled. In this case, we will first observe that $S_{\kappa_\eta}\notin N_\eta$ (although $S_{\kappa_\eta}\in M_X$ by defintion).
     This is a direct consequence of the construction.
     That is, suppose $S_{\kappa_\eta}\in N_\eta$.
     Note that $\pi_\eta(\vec{S}\upharpoonright\kappa_\eta)=\vec{S}$.
     Let $T=\{\alpha<\kappa_\eta\bigm|S_\alpha=S_{\kappa_\eta}\cap\alpha\}$, so then $T\in N_\eta$.
     Now $N_\eta\sats$ ``$T$ is stationary'',
     because otherwise we can fix a club $C\sub\kappa_\eta$
     with $C\in N_\eta$ such that $S_\alpha\neq S_{\kappa_\eta}\cap\alpha$ for all $\alpha\in C$.
     So letting $C^*=\pi_\eta(C)$ and $S^*=\pi_\eta(S_{\kappa_\eta})$,
     $C^*\sub\kappa$ is club and $S_\alpha\neq S^*\cap\alpha$ for all $\alpha\in C^*$.
     But since $\kappa_\eta=\crit(\pi_\eta)$,
     $\kappa_\eta\in C^*$ and $S_{\kappa_\eta}=S^*\cap\kappa_\eta$, contradiction. So $N_\eta\sats$ ``$T$ is stationary''. But then clearly $\pi_\eta(T)\sub\kappa$ is stationary and $S_\alpha=\pi_\eta(S_{\kappa_\eta})\cap\alpha$ for all $\alpha\in T$,
     and hence $S_\alpha=S_\beta\cap\alpha$
     for all $\alpha,\beta\in T$ with $\alpha<\beta$, contradicting the choice of $\vec{S}$.

     The remainder of the argument is much as before.
     But with notation as there, one also needs a little more thought to rule out the possibility, in the case that $\kappa_\eta$ is regular (hence passive)
     that $\widetilde{\kappa}_\eta^{+\widetilde{M}_X}\leq\OR^{\bar{N}^*_\infty}$. This uses what we established in the previous paragraph. For suppose
     $\widetilde{\kappa}_\eta^{+\widetilde{M}_X}\leq\OR^{\bar{N}^*_\infty}$. Let $\tau(\widetilde{S}_{\kappa_\eta})=S_{\kappa_\eta}$. We get $\widetilde{S}_{\kappa_\eta}\in \bar{N}^*_\infty$.
     But then $\tau^*(\widetilde{S}_{\kappa_\eta})=S_{\kappa_\eta}\in N_\eta$;
     this is because $\tau(\widetilde{S}_{\kappa_\eta})=\kappa_\eta$ and $\tau^*\upharpoonright(\bar{N}^*_\infty|\widetilde{\kappa}_\eta)=\tau\upharpoonright(\widetilde{M}_X|\widetilde{\kappa}_\eta)$ and $\tau``\widetilde{\kappa}_\eta$ is cofinal in $\kappa_\eta$ and
      $\widetilde{S}_{\kappa_\eta}\cap\alpha\in \bar{N}^*_\infty|\widetilde{\kappa}_\eta$ for each $\alpha<\widetilde{\kappa}_\eta$. But we saw $S_{\kappa_\eta}\notin N_\eta$, contradiction. This completes the proof.
\end{proof}

\section{$\diamondsuit^*_{\kappa\om_1}$}

\begin{proof}[Proof of Theorem \ref{tm:diamond^*}]
Work in $M$. Given $X\in\pow_{\om_1}(\kappa)$, let $\bar{M}_X=\cHull_\om^{M|\kappa}(X)$
and $\pi_X:\bar{M}_X\to M|\kappa$ the uncollapse map.

Say that a premouse $J\in\HC$
is \emph{$X$-good} iff $\bar{M}_X\pins J$,
$\OR^{\bar{M}_X}$ is a $J$-cardinal,
$J$ is sound, $\rho_\om^J=\OR^{\bar{M}_X}$, and there are $J',\sigma$
such that $J'\pins M|\kappa^{+}$,
 $\rho_\om^{J'}=\kappa$
and $\sigma:J\to J'$
is elementary and  $\pi_X\sub\sigma$.

\begin{clm}Let $J,K$ be $X$-good.
Then $J\ins K$ or $K\ins J$.
\end{clm}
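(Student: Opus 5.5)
Let $\rho=\OR^{\bar{M}_X}$. My plan is to reduce the claim to a comparison of two sound premice projecting to $\rho$ and agreeing below it, and then use the bicephalus argument from the end of the proof of Theorem \ref{tm:diamond^+}.

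Fix $X$-good $J,K$ with witnesses $\sigma:J\to J'$ and $\sigma':K\to K'$, where $J',K'\pins M|\kappa^{+M}$, $\rho_\om^{J'}=\rho_\om^{K'}=\kappa$, and $\pi_X\sub\sigma,\sigma'$. Both $J$ and $K$ extend $\bar{M}_X$, are sound, and project to $\rho$, where $\rho$ is a cardinal in each. Suppose for contradiction that neither is an initial segment of the other. Let $\chi$ be least such that $J||\chi\neq K||\chi$. Then $\chi>\rho$, since $J|\rho=\bar{M}_X=K|\rho$ and $\bar{M}_X\pins J,K$.

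I would first extract sound levels projecting to $\rho$ that agree below their common $\rho^+$. Set $H=J||\chi=K||\chi$. Since $\rho$ is a cardinal in both $J$ and $K$ and both project to $\rho$, every proper segment of $H$ projects $\geq\rho$. As in the proof of Theorem \ref{tm:diamond^+}, we may take the least $J_0\ins J$ and $K_0\ins K$ with $\OR\geq\chi$ and $\rho_\om=\rho$. (If $J$ or $K$ itself has height $\chi$ with active disagreement, take $J_0=J$ or $K_0=K$.) Then $\rho^{+J_0}=\rho^{+K_0}=\chi$, both are sound, and $J_0\neq K_0$. The case $J_0=H$ is impossible, because then $H\ins K$ would contradict the choice of $\chi$; so $(\rho,J_0,K_0)$ is a sound bicephalus in the sense of \cite{premouse_inheriting}. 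The degenerate cases, where $\chi=\OR^J$ or $\chi=\OR^K$, need a small amount of extra bookkeeping, but they still yield a bicephalus or a direct contradiction.

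Next I would establish iterability of the bicephalus. We have $\sigma\rest J_0:J_0\to\sigma(J_0)$ and $\sigma'\rest K_0:K_0\to\sigma'(K_0)$, both landing in initial segments of $M|\kappa^{+M}$ (or $J'$, $K'$ themselves). Both maps extend $\pi_X$, so they agree on $\bar{M}_X$, that is, below $\rho$, with $\sigma(\rho)=\sigma'(\rho)=\kappa$. The targets are then sound premice projecting to $\kappa$ in $M$, hence $(0,\om_1+1)$-iterable as segments of $M$ in hypothesis (i), or iterable in $M$ by hypothesis \ref{item:self-it}. In the latter case I would pull back along the embeddings of the countable $J_0,K_0$, which are elementarily embeddable into $N\pins M|\kappa^{+M}$. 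By the result of \cite{premouse_inheriting} used in the proof of Theorem \ref{tm:diamond^+}, the bicephalus $(\rho,J_0,K_0)$ is then iterable, and hence $J_0=K_0$. This is a contradiction, so $J\ins K$ or $K\ins J$.

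The main obstacle is the iterability step. The \cite{premouse_inheriting} result needs the copied bicephalus to be iterable via the two embeddings, and this requires enough iterability of $M$'s segments to compare countable bicephali. Under (i) this comes from $(0,\om_1+1)$-iterability of $M$, since the countable structures are iterated in $V$ with $\om_1^M=\om_1$. Under (ii) the comparison must be carried out inside $M$ using the assumed self-iterability of substructures of size $\leq\aleph_1$. I would carefully check that the bicephalus iteration strategy can be obtained there, by copying trees onto $\sigma(J_0)$ and $\sigma'(K_0)$ and using the agreement of $\sigma$ and $\sigma'$ below $\rho$.
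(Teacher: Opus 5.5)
Your proposal is correct and follows the paper's proof. You minimize to a non-trivial sound bicephalus $(\rho,J_0,K_0)$, get its iterability from \cite{premouse_inheriting} using that the two witnessing embeddings agree with $\pi_X$ below $\rho$, and conclude $J_0=K_0$.

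A few corrections. The split into hypotheses (i)/(ii), which you flag as the main obstacle, is unnecessary here. The statement $J\ins K\vee K\ins J$ is absolute between $M$ and $V$, so the $(0,\om_1+1)$-iterability of $M$ in $V$ already suffices; those hypotheses only matter for the next claim. You also assert $\rho^{+J_0}=\chi$ and $\sigma(\rho)=\sigma'(\rho)=\kappa$, and neither follows from what you have. Neither is needed: it suffices that $J_0||\rho^{+J_0}=K_0||\rho^{+K_0}$, which holds because both are computed inside $J||\chi=K||\chi$, and that $\sigma\rest\rho=\sigma'\rest\rho$.
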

\begin{proof}
Suppose not. Then by minimizing on the ordinal heights of $J,K$,
we get $J,K$ such that $J\neq K$
but $J||\bar{\kappa}_X^{+J}=K||\bar{\kappa}_X^{+K}$.
So $\mathscr{B}=(\bar{\kappa}_X,J,K)$
is a non-trivial sound bicephalus
(see \cite{premouse_inheriting}).
But since we have $\sigma_J:J\to J'$
and $\sigma_K:K\to K'$
and $\sigma_J\rest\bar{\kappa}_X=\pi\rest\bar{\kappa}_X=\sigma_K\rest\bar{\kappa}_X$,
$\mathscr{B}$ is iterable (see \cite{premouse_inheriting}).
So by \cite{premouse_inheriting}, $J=K$, a contradiction.
\end{proof}

Let $N_X$ be the premouse which is the stack of all $X$-good premice.
So note that $\OR^{N_X}\leq\om_1$.

\begin{clm}
$\OR^{N_X}<\om_1$.
\end{clm}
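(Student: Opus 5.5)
The plan is to argue by contradiction, using a countable elementary hull of a large level of $M$ and reflecting the stack $N_X$ into it. Work in $M$ and write $\bar\kappa_X=\OR^{\bar M_X}$. Suppose $\OR^{N_X}=\om_1$, where here and below $\om_1$ means $\om_1^M$. By the previous Claim the $X$-good premice are linearly ordered by $\ins$, and each one lies in $\HC$. So there are $\om_1$-many of them, cofinal in $\om_1$, each sound and projecting to $\bar\kappa_X$.

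First I will record a \emph{coherence} fact. If $J\pins K$ are both $X$-good with witness $\sigma_K:K\to K'$, then $\sigma_K(J)\pins K'$ and $\rho_\om^{\sigma_K(J)}=\kappa$, since $\sigma_K(\bar\kappa_X)=\kappa$ because $\pi_X\sub\sigma_K$. Moreover $\sigma_K\rest J$ witnesses the goodness of $J$. Hence goodness of $J$ is witnessed inside any larger good $K$, and $N_X$ is determined by $X$ in a first-order way over $M|\kappa^{+M}$.

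Next, choose a countable $H\preccurlyeq M|\theta$ for some sufficiently large $\theta<\OR^M$ (we have room since $\kappa^{+M}<\OR^M$), with $X\cup\{X,\kappa\}\sub H$ and $M|\kappa^{+M}\in H$. Let $\delta=H\cap\om_1$, let $\bar H$ be the transitive collapse of $H$, and let $\pi_H:\bar H\to H$ be the uncollapse map. Since $X\sub H$ and $\bar M_X$ is computed from $X$, $\bar M_X$ and $\pi_X$ are fixed by $\pi_H$, at least after composing with $\pi_H$ on the $\kappa$-side. Then $\pi_H^{-1}(N_X)$ should be exactly $N_X|\delta$: it is the stack of those good $J$ lying in $H$, and these are exactly the good $J$ with $\OR^J<\delta$, because $J\in\HC$ and $\delta\sub H$. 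By elementarity, $\bar H\sats$ ``$\OR^{N_X}=\om_1$''. So $\bar H$ thinks $N_X|\delta$ is the full stack and that $\delta$ is a cardinal.

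Now pick the least good $K\pins N_X$ with $\OR^K>\delta$. Then $K$ is sound, $\rho_\om^K=\bar\kappa_X$, and $K$ collapses $\delta$. I want to show that $K$, or the relevant initial segment of it, is already recognized by $\bar H$ as $X$-good. That would contradict $\bar H\sats$ ``$\OR^{N_X}=\delta$''. To get this, take $\sigma_K:K\to K'\pins M|\kappa^{+M}$ and compare $K$ with the collapse of $\Hull^{K'}(\rg(\sigma_K))\cap H$; equivalently, compare $\pi_H^{-1}$ of a suitable level of $M|\kappa^{+M}$ with $K$. I will apply the bicephalus argument of \cite{premouse_inheriting}, exactly as in the previous Claim and the proof of Theorem \ref{tm:diamond^+}. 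The two sides agree through $\bar\kappa_X^{+}$, both are sound and project to $\bar\kappa_X$, and both embed into levels of $M$ via maps agreeing with $\pi_X$ below $\bar\kappa_X$. These embeddings give the iterability of the bicephalus, using hypothesis (i) or (ii) of Theorem \ref{tm:diamond^*} to get $(0,\om_1+1)$-iterability inside $M$. Hence the two sides are equal, and $K$ (or a level of $K$ above $\delta$) lies in $\bar H$ and is good there.

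The main obstacle is this last step: producing, inside $\bar H$, a good premouse of height $\geq\delta$. One must arrange that the witnessing embedding into a level of $M|\pi_H^{-1}(\kappa)^{+}$ computed in $\bar H$ really agrees with $\pi_X$. This is also where hypothesis (i) or (ii) is needed, to know that $\bar H$ computes the relevant iterability and goodness correctly. If the direct argument there fails, I would instead try to build from the coherent family of witnesses a single elementary $\sigma:N_X\to M|\kappa^{+M}$ extending $\pi_X$. Since $\mathrm{cof}^M(\kappa^{+M})>\om_1$, its range is bounded in $\kappa^{+M}$. Taking a countable hull of a level above that bound and collapsing would then produce a good premouse extending all of $N_X$, contradicting the maximality of the stack.
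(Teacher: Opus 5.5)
\textbf{Gap.} The proposal breaks down at exactly the step you flag, and the reflection strategy cannot repair it.

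First, the contradiction you aim for is unavailable by definition. Goodness computed in $\bar H$ (relative to $\bar X=\pi_H^{-1}(X)$) requires membership in $\HC^{\bar H}=M|\delta$, so nothing of height $\geq\delta$ can be good in $\bar H$. The salvageable target would be $K\in\bar H$ for a good $K$ with $\OR^K>\delta$, but your comparison cannot produce that either. The bicephalus argument can only identify $K$ with a second head that already lies in $\bar H$. Every sound premouse in $\bar H$ projecting to the countable ordinal $\bar\kappa_X$ is countable in $\bar H$, being the $\Sigma_{n+1}$-hull of $\bar\kappa_X\cup\{p\}$, so it has height $<\delta$. If such a premouse embeds suitably into $M$, then by the first Claim it is just a proper initial segment of $K$, and the comparison tells you nothing. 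Two further problems: $\Hull^{K'}(\rg(\sigma_K))$ is already $\rg(\sigma_K)$, and intersecting it with $H$ need not give an elementary substructure; and $\sigma_K\notin H$ once $\OR^K>\delta$.

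More generally, the reflection by itself carries no contradiction. That $\bar H$ sees $N_X|\delta$ as the entire stack, of height $\om_1^{\bar H}$, is just the elementary image of your hypothesis.

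The fallback fails too. Nothing makes the $\sigma_J$ cohere into a single embedding of $N_X$: $\sigma_K\rest J$ witnesses goodness of $J$ but need not equal $\sigma_J$. Even given such an embedding, the collapse of a countable hull is countable, so it cannot end-extend $N_X$, which has height $\om_1$. The collapse of an uncountable hull is not in $\HC$, so it is not good.

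\textbf{Missing idea.} You need to make $N_X$ itself iterable and then compare. The paper treats the family of all good $J$, with their maps $\sigma_J$ (all agreeing with $\pi_X$ below $\bar\kappa_X$), as a bicephalus with infinitely many heads. Iterating $N_X$ amounts to iterating this object, and the usual bicephalus iterability proof goes through, so $N_X$ is $(0,\om_1+1)$-iterable. The paper then compares $N_X$ with $M|\om_1^M$ in a universe where $\om_1^M=\om_1$. Hypotheses (i) and (ii) are used exactly to supply such a universe. Under (ii), iterability is transferred into $M$ via the collapse of an $\aleph_1$-sized substructure containing all the $\rg(\sigma_J)$. The comparison is impossible, because $N_X$ has height $\om_1$ but a countable largest cardinal $\bar\kappa_X$.

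In your sketch, (i) and (ii) do no work. The conclusion $J=K$ of a two-headed bicephalus argument is absolute, so iterability in $V$ already suffices for it. This is a sign that your argument never reaches the point where it matters that $\om_1^M$ is the true $\om_1$.
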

\begin{proof}
Suppose otherwise. Then
there is no largest $X$-good $J$,
so $N_X\sats$ ``$\bar{\kappa}_X$ is the largest cardinal'', and $\om_1^{N_X}<\om_1=\OR^{\bar{N}_X}$.

For each $X$-good $J$,
let $J'$ be the least segment of $M|\kappa^+$ witnessing the fact that $J$ is $X$-good.
Note that there is a unique elementary $\sigma:J\to J'$ such that $\pi_X\sub\sigma$; denote it $\sigma_J$.

Now we can treat $N_X$ as a ``bicephalus'' $\mathscr{B}$ with infinitely many models (the good $J$'s). The general calculations go through with 
$\mathscr{B}$ just like for standard (2-model) bicephali.

Further, $\mathscr{B}$ is $(0,\om_1+1)$-iterable in $V$, since
 all $\sigma_J$ agree with $\pi$, and the usual iterability proof adapts immediately to this context. But then $N_X$ is $(0,\om_1+1)$-iterable, since iterating $N_X$ is equivalent to iterating $\mathscr{B}$.
And if hypothesis 
\ref{item:self-it} holds, then similarly, $\mathscr{B}$ and  $N_X$ are  $(0,\omega_1+1)$-iterable in $M$, because working in $M$, letting $\eta<\kappa^{+M}$ be above $\OR^{J'}$ for each good $J$,
we can fix a size $\aleph_1$ substructure $X\preccurlyeq N|\eta$ containing the range of each $\sigma_J$, and then letting $\bar{N}$ be the transitive collapse of $X$, use the iterability of $\bar{N}$ to deduce that of $\mathscr{B}$.

So in either case, both $M|\om_1^M$ and $N_X$ are $(0,\om^M_1+1)$-iterable in some universe in which $\om_1^M=\om_1$ (either $V$ or $M$). Work in that universe. We can successfully compare them, with a comparison of length $\leq\om_1$. But since $\om_1^{N_X}<\om_1$ and $N_X$ has a largest cardinal, this is impossible.
\end{proof}

Now define $F(X)=\{X\cap\pi_X``x\bigm|x\in N_X\}$.

We claim $F$ works. For let $A\sub\kappa$. Let $J'\pins M|\kappa^+$ be such that $\rho_\om^{J'}=\kappa$
and $A\in J'$.
Note that for a club of $X\in\pow_{\om_1}(\kappa)$,
there is an $X$-good $J\pins N_X$
and a witnessing embedding $\sigma:J\to J'$
such that 
$\sigma(\rho_\om^J)=\sigma(\OR^{\bar{M}_X})=\kappa=\rho_\om^{J'}$
and $A\in\rg(\sigma)$.
But then letting $\sigma(x)=A$,
we get $X\cap\pi_X``x=A\cap X\in F(X)$,
as desired.
\end{proof}
\bibliographystyle{plain} 
\bibliography{bibliography}

\begin{thebibliography}{1}

\bibitem{jensen_diamond}
Ronald Jensen.
\newblock Some combinatorial properties of {$L$} and {$V$}.
\newblock Handwritten notes available at
  \url{https://www.math.uni-bonn.de/~raesch/jensen/}, 1969.

\bibitem{sile}
Ralf Schindler and John Steel.
\newblock The self-iterability of {$L[E]$}.
\newblock {\em Journal of Symbolic Logic}, 74(3):751--779, 2009.

\bibitem{premouse_inheriting}
Farmer Schlutzenberg.
\newblock A premouse inheriting strong cardinals from {$V$}.
\newblock {\em Annals of Pure and Applied Logic}, 171(9), 2020.

\bibitem{odle}
Farmer Schlutzenberg.
\newblock Ordinal definability in {$L[\mathbb {E}]$}.
\newblock {\em The Journal of Symbolic Logic}, 91(2):489--537, 2026.

\end{thebibliography}
\end{document}